\newtheorem{theorem}{Theorem}
\newtheorem{definition}{Definition}
\author{Koki Suetsugu}
\title{Discovering a new universal partizan ruleset}
\begin{document}
\maketitle

\begin{abstract}
    In Combinatorial Game Theory, we study the set of games   $\mathbb{G}$, whose elements are mapped from positions of rulesets.
In many case, given a ruleset, not all elements of $\mathbb{G}$ can be given as a position in the ruleset. It is an intriguing question what kind of ruleset would allow all of them to appear. In this paper, we introduce a ruleset named {\sc turning tiles} and prove the ruleset is a universal partizan ruleset, that is, every element in $\mathbb{G}$ can occur as a position in the ruleset. This is the second universal partizan ruleset after {\sc generalized konane.}
\end{abstract}

\section{Introduction}
Combinatorial Game Theory studies algebraic structures of two-player games with no chance move nor hidden information.
Here, we introduce some definitions and theorems of Combinatorial Game Theory for later discussion. For more details of Combinatorial Game Theory, see \cite{LIP, CGT}.

In this theory, the two players are called Left and Right.
The term "game" is polysemous, and we refer to each position as a {\it game}. The description of what moves are allowed for a given position is called the {\it ruleset}.

A game is defined by Left and Right options recursively.
\begin{definition}
$\{ \mid \}$ is a game.
Let $G^L_1, G^L_2, \ldots, G^L_n, G^R_1, G^R_2, \ldots, G^R_m$ be games. $\{ G^L_1, G^L_2, \ldots, G^L_n \mid G^R_1, G^R_2, \ldots, G^R_m\}$ is also a game. $G^L_1, G^L_2, \ldots, G^L_n$ are called left options and $G^R_1, G^R_2, \ldots, G^R_m$ are called right options.

Let $\mathbb{G}$ be the set of all games.
\end{definition}
As this definition, in every game, the sets of left and right options do not need to be the same. Such games are called {\em partizan} games.  

The position with neither option, $\{\mid\}$, is called the {\it terminal position}, or $0$. In this paper, we assume that the play is under {\em normal play} convention, that is, the player who moves last is the winner. 

We denote by $\mathcal{L}$, $\mathcal{R}$, $\mathcal{N}$, and $\mathcal{P}$ the set of positions in which $\mathcal{L}$eft, $\mathcal{R}$ight, the $\mathcal{N}$ext player, and the $\mathcal{P}$revious player have winning strategies, respectively. Every position belongs to exactly one of the sets. We say the sets are outcomes of the games. For a game $G,$ let $o(G)$ be the outcome of $G$.
We define the partial order of outcomes as $\mathcal{L} > \mathcal{P} > \mathcal{R}, \mathcal{L} > \mathcal{N} > \mathcal{R}.$

The {\it disjunctive sum} of games is an important concept in Combinatorial Game Theory. For games $G$ and $H$, a position in which a player makes a move for one or the other on their turn is called a disjunctive sum of $G$ and $H$, or $G + H$. More precisely, it is as follows:

\begin{definition}
If the game trees of $G$ and $H$ are isomorphic, then we say these games are isomorphic or $G \cong H.$
\end{definition}

\begin{definition}
For games $G \cong \{G^L_1, G^L_2 \ldots G^L_n \mid G^R_1, G^R_2, \ldots, G^R_{m}\}$ and $H \cong \{ H^L_1, H^L_2 \ldots, H^L_{n'} \mid H^R_1, H^R_2, \ldots, H^R_{m'}\},
G+H \cong \{G + H^L_1, G+H^L_2, \ldots, G+H^L_{n'}, G^L_1 + H, G^L_2+H, \ldots, G^L_n + H \mid G+H^R_1, G+H^R_2, \ldots, G+H^R_{m'}, G^R_1+H, G^R_2+H, \ldots, G^R_m + H \}.$
\end{definition}

We also define equality, inequality and negative of games.
\begin{definition}
If for any $X, o(G + X)$ is the same as $o(H + X),$ then we say $G = H$. 
\end{definition}

\begin{definition}
If $o(G + H) \geq o(H + X)$ holds for any $X,$ then we say $G \geq H.$
On the other hand, if $o(G + H) \leq o(H + X)$ holds for any $X,$ then we say $G \leq H.$
We also say $G \gtrless H$ if $ G \not \geq H$ and $G \not \leq H.$
\end{definition}

\begin{definition}
For a game $G \cong  \{G^L_1, G^L_2, \ldots, G^L_n \mid G^R_1, G^R_2, \ldots, G^R_m\},$ let $-G \cong  \{ -G^R_1, -G^R_2, -G^R_m \mid -G^L_1, -G^L_2, \ldots, -G^L_n\}.$

$G + (-H)$ is denoted by $G - H$. 
\end{definition}

It is known that $(\mathbb{G}, +, =)$ is an aelian group and $(\mathbb{G}, \geq, =)$ is a partial order. That is, for any games $G, H$ and $J$, following conditions are satisfied;

\begin{itemize}
\item $(G + H) + J = G + (H + J)$.
\item $G + 0 = G$.
\item $G - G = 0$.
\item $G + H = H + G$.
\item $G \leq G.$
\item $ G \leq H, H \leq J \Rightarrow G \leq J.$ 
\item If $G \leq H$ and $H \leq G,$ then $G = H.$ 
\end{itemize}

It is also known that $G = 0 \Leftrightarrow o(G) = \mathcal{P}.$
From these results, it is clear that $G = H \Leftrightarrow G-H=0$. This means that if we need to find out whether two games $G$ and $H$ are equal, we can check whether the previous player has a winning strategy in $G-H$. This fact is often used.

The question arises here, will there be a ruleset in which for any game there is a position equal to the game? If the games appearing in each ruleset are restricted, then perhaps we should think in a narrower framework.
In fact, however, it is known that for every game, a position equal to the game appears in the ruleset called {\sc generalized konane}.








\subsection{The rule of {\sc generalized konane}}
{\sc Generalized konane} is a ruleset based on {\sc konane,} a traditional Hawaiian ruleset.
The ruleset of {\sc generalized konane} is as follows:
\begin{itemize}
    \item There are black and white pieces on a grid-like board. Left moves the black pieces and Right moves the white pieces.
    \item On their turn, a player can jump over an adjacent opponent's piece by their piece to eliminate it. However, the square at the end of the jump must be empty.
    \item If a piece is adjacent to another opponent's piece after a jump, the player can continue to jump over it if it is in the same direction.
    \item The player who moves last is the winner. 
\end{itemize}

Figure \ref{figkonane} is a position of {\sc generalized konane}. 
\begin{center}
\begin{figure}[tb]
\label{figkonane}
\includegraphics[height=8cm]{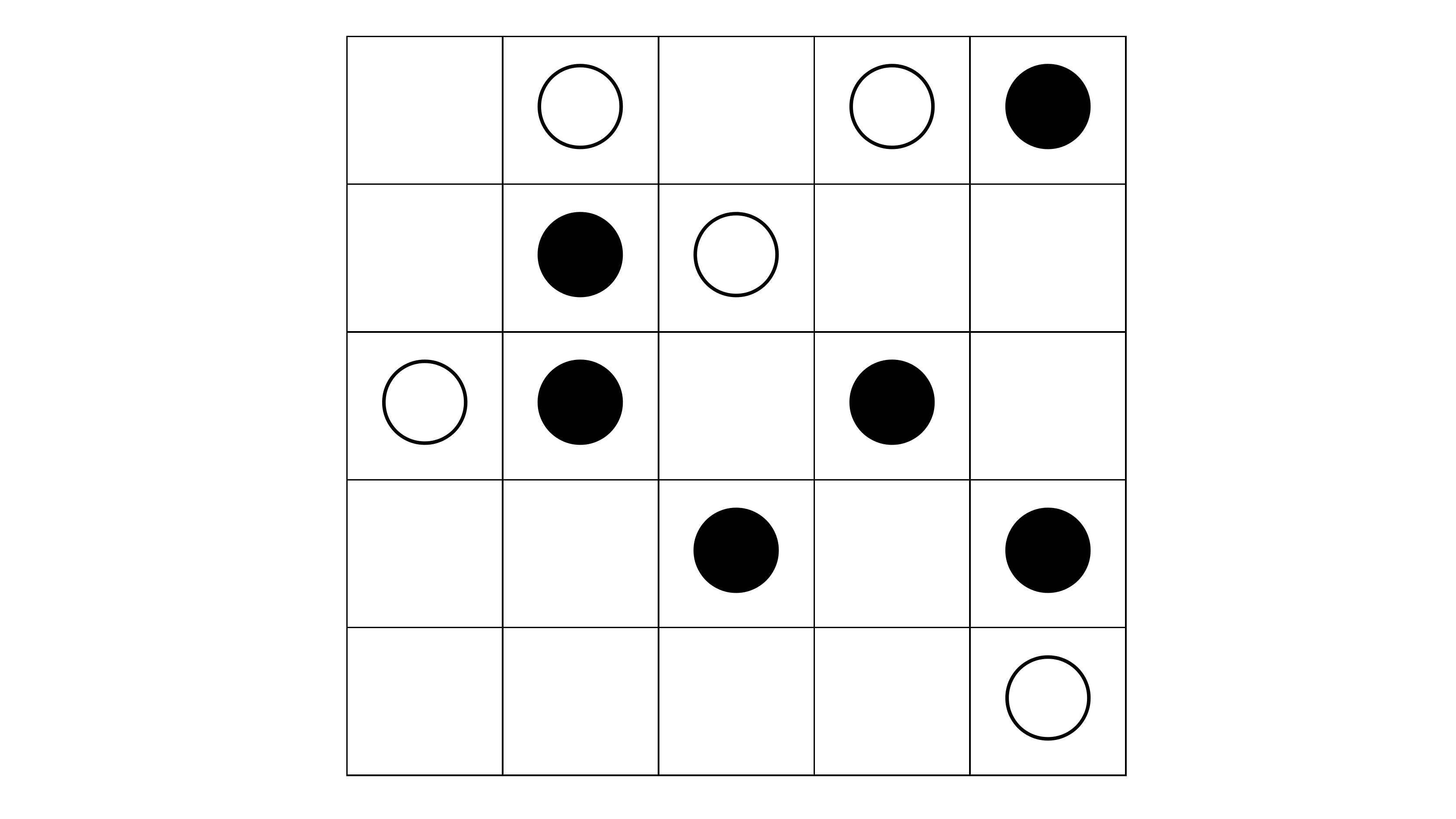}
\caption{Position in {\sc generalized konane}.}
\end{figure}
\end{center}


A {\em universal partizan ruleset} is a ruleset such that every game in $\mathbb{G}$ is equal to a position of the ruleset.
{\sc Generalized konane} was proved to be a universal partizan ruleset by Carvalho and Santos in \cite{CS} and this ruleset has been the only ruleset proved as a universal partizan ruleset. Thus, to find a new universal partizan ruleset is an interesting problem and it was introduced in \cite{Now} as a unsolved problems in Combinatorial Game Theory.
In this paper, we introduce a ruleset and prove the ruleset is also a universal partizan ruleset.

\subsection{The rule of {\sc turning tiles}}
In this paper, we introduce a ruleset named {\sc turning tiles.}
The ruleset of {\sc turning tiles} is as follows:
\begin{itemize}
    \item Square tiles are laid out. The front side is red or blue, and the back side is black. 
    \item Some pieces are on tiles.
    \item Each player (Left, whose color is bLue and Right, whose color is Red), in their turn, take a piece and move the piece straight on the tiles of their color.
    \item Tiles on which the piece pass over are turned over.
    \item The player who moves last is the winner.    
\end{itemize}

\begin{center}
\begin{figure}[tb]
\includegraphics[height=12cm]{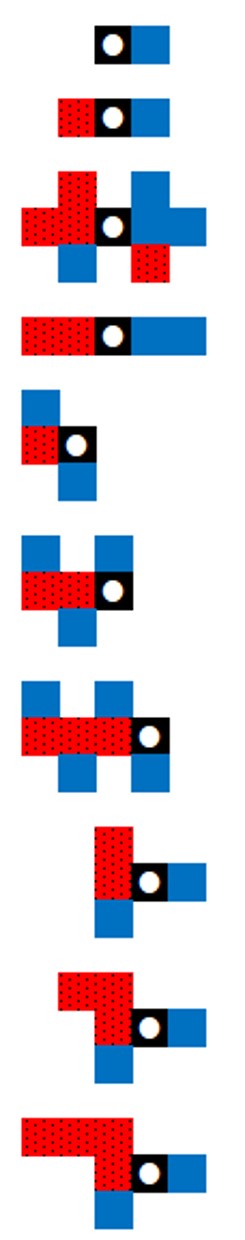}
\caption{Some simple values in {\sc turning tiles} \\From top to bottom: $1,*,*2,\pm 1,
\label{simplevalues1}
\frac{1}{2}, \frac{1}{4}, \frac{1}{8},\uparrow,$ tiny$1$, tiny$2$}
\end{figure}
\end{center}
Figure \ref{simplevalues1} shows some positions of {\sc turning tiles}. For the meanings of values, see the textbooks of Combinatorial Game Theory like \cite{CGT, LIP}.

As far as the author knows, this ruleset has not been invented before, but it seems simple, natural, and fun to play. The main result of this paper is that this ruleset is universal partizan ruleset.

The paper is organized as follows. In Section~\ref{sec2}, we prove that {\sc turning tiles} is a universal partizan ruleset. The final section presents the conclusions.

\section{Main result}
\label{sec2}
In this section, we prove that {\sc turning tiles} is a universal partizan ruleset.
\begin{theorem}
{\sc Turning tiles} is a universal partizan ruleset even if the number of pieces is one.
\end{theorem}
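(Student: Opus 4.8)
The plan is to induct on the height of the game tree of $G$. For the base case, $0=\{\mid\}$ is realised by any {\sc turning tiles} position admitting no legal move, for instance a single piece every neighbour of which is already showing its black (dead) side. For the inductive step I assume that each option of $G=\{G^L_1,\ldots,G^L_n\mid G^R_1,\ldots,G^R_m\}$ is already the value of some one-piece position, say $P^L_i$ realises $G^L_i$ and $P^R_j$ realises $G^R_j$, and I build a one-piece position $P$ of value $G$.

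The engine of the construction is the observation that, with a single piece, the value of a position depends only on the sub-board the piece can still reach, since every tile the piece crosses is turned to its black side and can never be crossed again. I would therefore use a \emph{hub-and-corridor} gadget: the piece starts on a central hub, from which straight monochromatic corridors run out to the (far apart) boards $P^L_1,\ldots,P^L_n$, paved in Left's colour, and to $P^R_1,\ldots,P^R_m$, paved in Right's colour, all other tiles being black walls. A committal Left slide down the $i$-th blue corridor turns that whole corridor black, so the hub and every other gadget become permanently unreachable by the lone piece; what remains is $P^L_i$ sitting among inert dead tiles, a position of value $G^L_i$. Colour separation forces Left's slides into the blue corridors and Right's into the red ones, so the reachable committal moves are exactly the $n$ Left entries and the $m$ Right entries. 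Packing arbitrarily many options into the four available directions is arranged by routing all of Left's corridors off a single blue ray and all of Right's off a single red ray, the $k$-th option being entered by sliding to the $k$-th branch point.

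The crux is to show that these committal moves are, up to value, the \emph{only} options, so that the canonical form of $P$ is $\{G^L_1,\ldots,G^L_n\mid G^R_1,\ldots,G^R_m\}=G$. The superfluous moves to kill are the non-committal ones: halting a slide before a branch point, or overshooting it along the shared ray, together with any short reply the opponent has near the hub. I would eliminate each of these with the standard reduction to canonical form, namely domination and reversibility (see \cite{CGT, LIP}): a spurious Left option should either be dominated by one of the intended $G^L_i$, or be reversible, i.e.\ admit a forced Right reply along the same ray to a position no larger than $G$; symmetrically, each spurious Right option should be dominated by some $G^R_j$ or reversible through a Left reply to a position no smaller than $G$. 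Once every extra option reverses or is dominated out, the remaining Left and Right options have values exactly $\{G^L_i\}$ and $\{G^R_j\}$, whence $P=G$.

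I expect the geometric part to be routine once single-piece localisation is in hand; the main obstacle is the simplification bookkeeping in the last step. Concretely, the delicate point is to choose the corridor lengths and the precise blue/red/black pattern of the walls flanking each ray so that every partially completed slide has a forced continuation witnessing the needed inequality $G^{LR}\le G$ (and its Right-analogue). Verifying these inequalities is where the exact turning-and-flipping convention of {\sc turning tiles} has to be used, and it is the step I would budget the most care for.
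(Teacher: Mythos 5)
There is a genuine gap, and it is in the geometric construction itself, not just in the ``simplification bookkeeping'' you flag at the end. A monochromatic corridor is not a neutral connector in {\sc turning tiles}: a straight run of $j$ blue tiles in front of the lone piece is a position of value $j$, since Left may stop after any number of steps and each stop leaves a shorter all-blue run that only Left can use. Consequently a partial slide down one of your blue corridors is not a ``spurious'' option that domination or reversibility will remove --- it is typically Left's \emph{best} move, because it banks the unused remainder of the corridor as a free integer reserve while keeping $P^L_i$ reachable. Concretely, try $G=*=\{0\mid 0\}$ with corridors of length $\ell\ge 2$: your gadget has value $\{0,1,\ldots,\ell-1\mid 0,-1,\ldots,-(\ell-1)\}=\pm(\ell-1)$, not $*$. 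Nor can reversibility rescue these options: a piece stranded partway along a blue ray flanked by black tiles admits no Right move at all, so the option cannot reverse out, and it is not dominated by (indeed it strictly exceeds) the intended option $G^L_i$ whenever blue tiles remain ahead. The same defect is amplified by your shared blue ray with branch points.

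The missing idea is a mechanism that \emph{punishes every partial slide immediately}, and this is exactly what the paper's gadget supplies. In the paper, entering an option is not one committal move but a forced three-move alternating sequence through ``first, second, third connection points,'' with the colours arranged so that after Left's step to the first connection point, Right can push the piece onward (a red continuation), and so that stopping anywhere off the intended path hands the opponent access to an ``enough wide'' region of the opponent's colour --- a decisive threat that makes every deviation immediately losing. With that in place the paper avoids canonical-form bookkeeping entirely and instead verifies $X-G\in\mathcal{P}$ by a direct second-player pairing strategy (answer the entry into $G^{L}_i$ in $X$ by moving $-G$ to $-G^{L}_i$, and vice versa), in the style of the {\sc generalized konane} proof of \cite{CS}. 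To repair your argument you would need to redesign the connectors so that the opponent, not the mover, controls whether an entry is completed, and so that no stopping point leaves the mover with a positive monochromatic reserve; as written, the hub-and-corridor position does not have value $G$.
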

\begin{proof}
We prove this theorem by a similar way to the proof of the theorem that {\sc generalized konane} is a universal partizan ruleset shown in \cite{CS}.

We show every $G \cong \{ G^L_1, \ldots, G^L_k \mid G^R_1, \ldots, G^R_l  \}$ is constructed as Figure \ref{Rec}.

\begin{center}
\begin{figure}[tb]
\includegraphics[height=10cm]{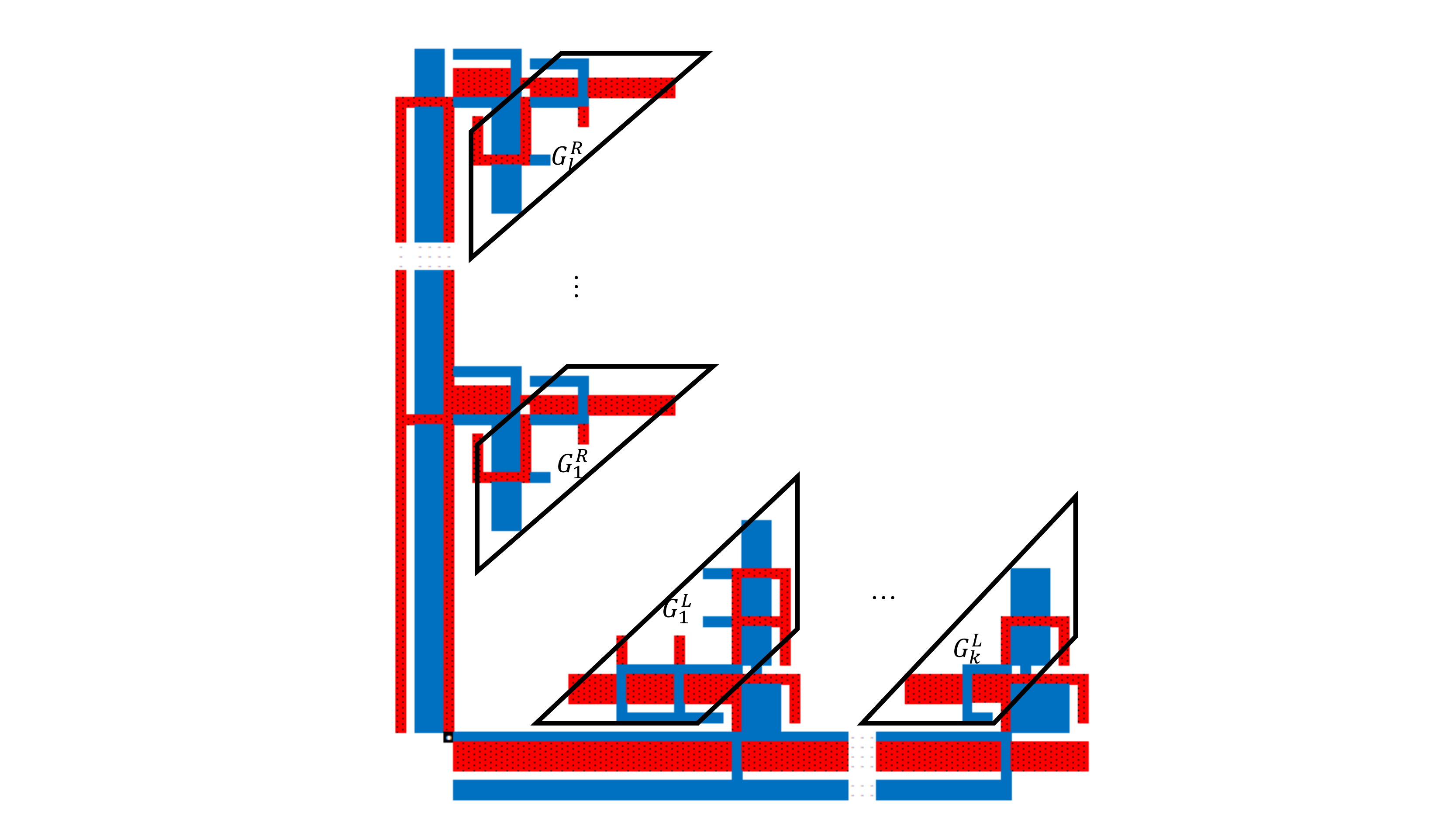}
\caption{Recursive composition of $G = \{G^L_1, \ldots, G^L_k \mid G^R_1, \ldots, G^R_l\}$.}
\label{Rec}
\end{figure}
\end{center}

We prove this theorem by induction. Assume that each $G^L_1,  \ldots, G^L_k, G^R_1,  \ldots, G^R_l$ has already constructed in the same way.

We say some points are first, second and third connection points of options and some parts are connection arms as Figures \ref{term1} and \ref{term2}. Assume that connection arms are enough long.

\begin{center}
\begin{figure}[tb]
\includegraphics[height=10cm]{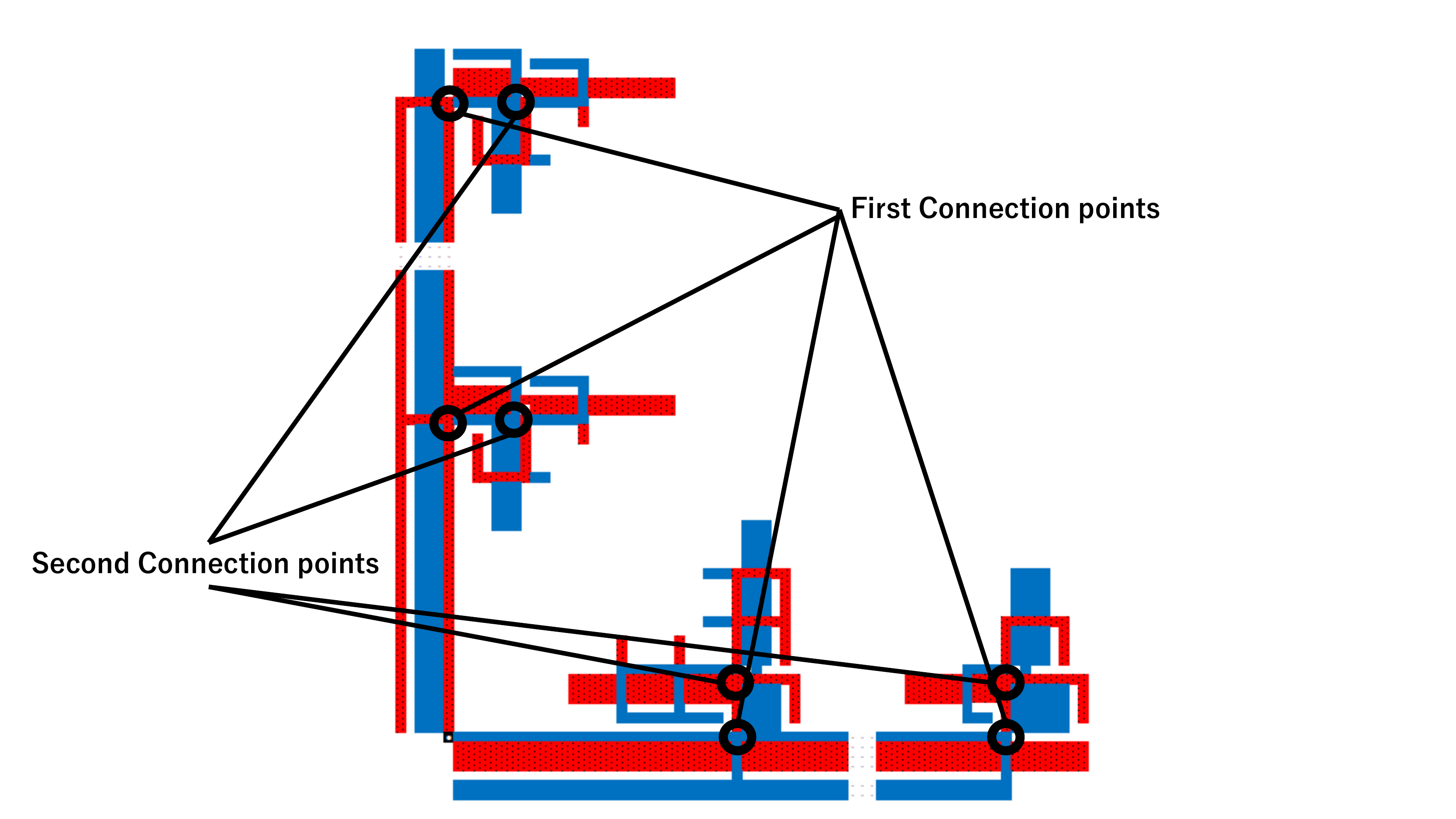}
\caption{First and second connection points.}
\label{term1}
\end{figure}
\end{center}
\begin{center}
\begin{figure}[tb]
\includegraphics[height=10cm]{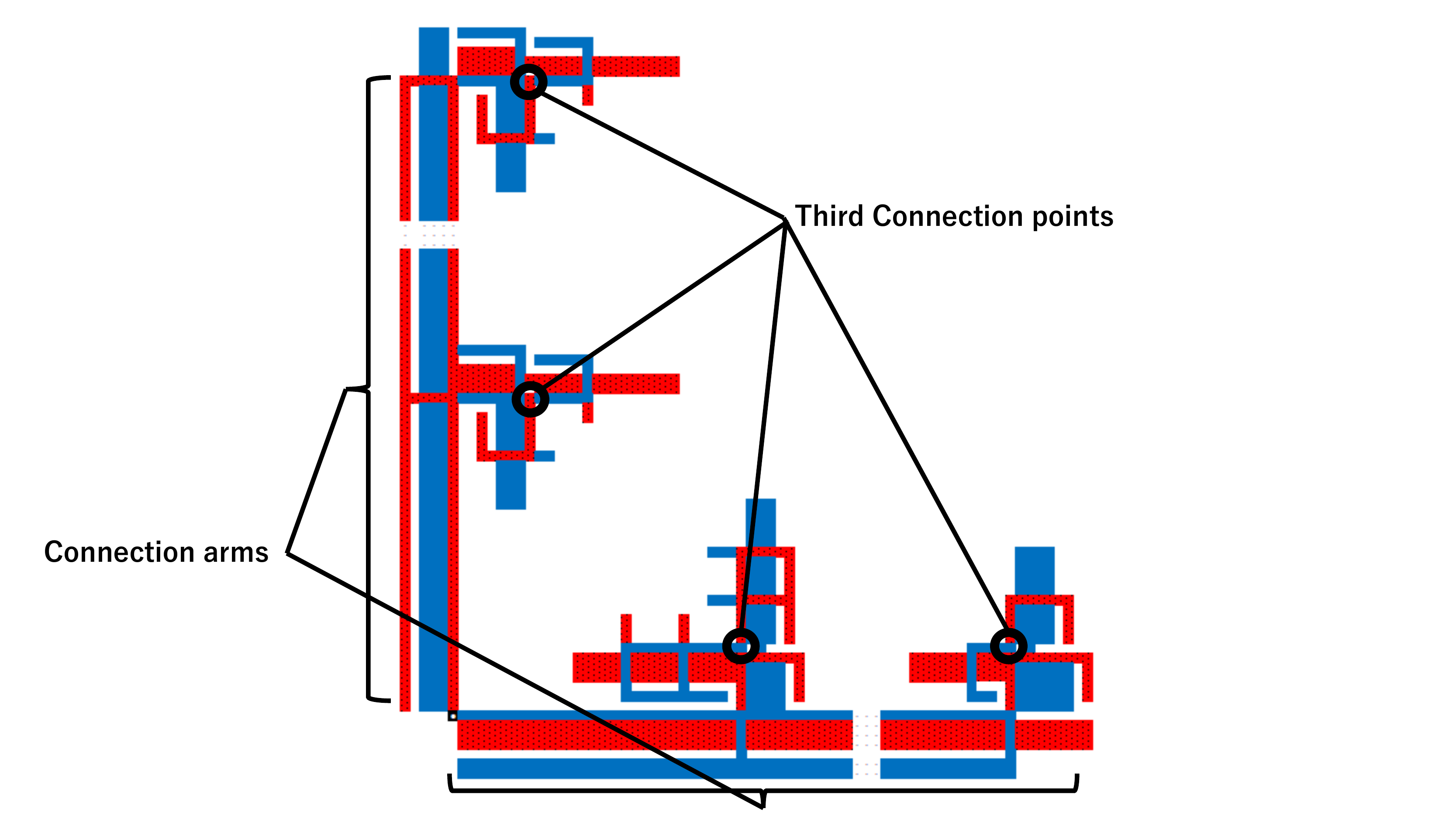}
\caption{Third connection points and connection arms.}
\label{term2}
\end{figure}
\end{center}

Let this position be $X.$ We prove that $o(X - G) = \mathcal{P}.$

Suppose that Left moves first. Consider the case Left moves a piece on $X$. Except for the case to move the piece one of the first connection points, Right can win because he can move the piece to enough wide red area. Thus, without loss of generality, we assume that Left moves the piece to the first connection point  of $G^L_1$. 

Immediately after the move, Right moves the piece to second connection point of $G^L_1$ and force Left to move third connection point of $G^L_1$ (because otherwise Right can win by moving the piece to enough wide red area), and if Left moves so, then the whole position changes to $G^L_1 - G.$ Therefore, Right can move to $G^L_1 -G^L_1 = 0$ and he wins.

Consider the other case, that is, Left moves $X - G$ to $X - G^R_i.$
Without loss of generality, we assume that $i = 1.$  Then Right moves the piece to first connection point of $G^R_1.$ Left has to move the piece to second connection point of $G^R_1$ because otherwise Right can win by moving the piece to enough wide red area. However, even if Left moves the piece to second connection point of $G^R_1,$ Right can moves the piece to third connection point of $G^R_1$ and then the whole position becomes $G^R_1 - G^R_1 = 0$ and Right wins.

Therefore, for $X-G,$ Right wins if Left moves first. By using similar way, Left wins if Right moves first. Thus, $X = G.$

\end{proof}

\section{Conclusion}
\label{sec3}
In this paper, we have defined a new ruleset, {\sc turning tiles}, and have proven that it is a universal partizan ruleset. This result gives a solution to an open problem in combinatorial game theory. The ruleset is simple and not unnatural even if we consider actual play. The fact that we were able to show that even such a ruleset is a universal partizan ruleset suggests that {\sc generalized konane} is not a very special case and there may be more universal partizan rulesets in the commonly played rulesets.

\end{document}